\definecolor{grn}{rgb}{0,0.4,0}
\definecolor{dgrn}{rgb}{0.0,0.3,0.0}
\definecolor{dpur}{rgb}{0.3,0.0,0.6}
\numberwithin{equation}{section}
\newtheorem{prop}{Proposition}
\newtheorem{lemma}[prop]{Lemma}
\newtheorem{cor}[prop]{Corollary}
\numberwithin{prop}{section}
\theoremstyle{definition}
\newtheorem{rmk}[prop]{Remark}
\newtheorem*{thm*}{Theorem}
\newtheorem*{oprob}{Open Problem}
\author[M.~Petrache]{Mircea~Petrache}
\address{Pontificia Universidad Catolica de Chile, Santiago, Chile} \email{decostruttivismo@gmail.com}
\thanks{The author acknowledges support from the FONDECYT \emph{Iniciacion en Investigacion 2017} grant N. 11170264.}
\begin{document}
\title[Cyclical Monotonicity]{Cyclically monotone non-optimal $N$-marginal transport plans and Smirnov-type decompositions for $N$-flows.}

\date{\today}
\begin{abstract}
In the setting of optimal transport with $N\ge 2$ marginals, a necessary condition for transport plans to be optimal is that they are $c$-cyclically monotone. For $N=2$ there exist several proofs that in very general settings $c$-cyclical monotoncity is also sufficient for optimality, while for $N\ge 3$ this is only known under strong conditions on $c$. Here we give a counterexample which shows that $c$-cylclical monotonicity is in general not sufficient for optimality if $N\ge 3$. Comparison with the $N=2$ case shows how the main proof strategies valid for the case $N=2$ might fail for $N\ge 3$. We leave open the question of what is the optimal condition on $c$ under which $c$-cyclical monotonicity is sufficient for optimality. The new concept of an $N$-flow seems to be helpful for understanding the counterexample: our construction is based on the absence of finite-support $N$-cycles in the set where our counterexample cost $c$ is finite. To follow this idea we formulate a Smirnov-type decomposition for $N$-flows.
\end{abstract}
\maketitle 
\tableofcontents

\section{Introduction, basic definitions and setting}\label{sec:combred}

Let $c:\vec X:=X_1\times X_2\times\ldots\times X_N\to Y$ be a cost, where $N\ge 2$, $X_1,\ldots,X_N$ are Polish spaces, and $Y=(Y,+,\ge)$ is an ordered group (usually we take $Y=\mathbb R$ or $Y=(-\infty,+\infty]$). If $\mu_1,\ldots,\mu_N$ are probability measures with $\rho_j\in\mathcal P(X_k)$ and $\pi_j$ is the canonical projection of $\vec X$ onto $X_k$, then the possible ways to couple the $N$ measures $\mu_k, 1\le k\le N$ are encoded by the space of so-called \emph{transport plans} defined as 
\[
\Pi(\mu_1,\ldots,\mu_N):=\left\{\gamma\in\mathcal P(\vec X):\ (\pi_k)_\#\gamma = \mu_k\ \mbox{ for }1\le k\le N\right\}.
\]
The \emph{$N$-marginal optimal transport problem} with cost $c$ as above with \emph{marginals $\mu_k,1\le k\le N$}, is the following minimization problem:
\begin{equation}\label{ot}
 \min\left\{\langle c, \gamma\rangle:\ \gamma\in\Pi(\mu_1,\ldots,\mu_N)\right\},\quad\mbox{where}\quad \langle c,\gamma\rangle:=\int_{\vec X}c(\vec x)d\gamma(\vec x).
\end{equation}
Existence of minimizers can be ensured e.g. by assuming that $c$ is lower semicontinuous and bounded below, by the direct method of the calculus of variations and via Prokhorov's theorem. We do not focus on existence and uniqueness in the present work, and for our purposes, when needed, we can just assume that minimizers exist. Both in the theoretical development and in algorithmic implementations, it becomes important to limit the class of plans $\gamma$ that we are required to test in the minimization above (see \cite{frieseckevogler} and the references therein). 

\medskip

One important property which in many cases ensures optimality of $\gamma$, and which is the main focus of the present paper, is $c$-monotonicity. This condition was introduced in the case of $N=2$ marginals as a natural extension valid in the case $c(x,y)=\langle x,y\rangle$ for $x,y\in \mathbb R^d$, and makes a natural connection to convex analysis (see \cite{Villani} and \cite{Rockafellar}). For a recent treatment of the case of general $N$ from the point of view of the analogy with convex geometry, see \cite{bbbrw}, \cite{bbw} and the references therein. We now pass to give the precise definition for general $N$.

\medskip

A set $A\subset \vec X$ is $c$-monotone (or $c$-cyclically monotone) if for all $\ell\in \mathbb N$ and every $N$-ple of permutations $\vec \sigma=(\sigma_1,\ldots,\sigma_N)\in (S_\ell)^N$, for every $\ell$-ple $\vec x=(\vec x_1,\ldots,\vec x_\ell)\in (\vec X)^\ell$ with $\vec x_j=(x_j^1,\ldots,x_j^N)\in A$ for each index $j=1,\ldots,\ell$ there holds
\begin{equation}\label{cycmon}
 \sum_{j=1}^\ell c(\vec x_j)\le \sum_{j=1}^\ell c((\vec\sigma\cdot\vec x)_j),\quad\mbox{where}\quad (\vec\sigma\cdot\vec x)_j:=(x_{\sigma_1(j)}^1,x_{\sigma_2(j)}^2,\ldots,x_{\sigma_N(j)}^N)\in \vec X.
\end{equation}
As pointed out for example in \cite{Griessler}, this is equivalent to requiring that for any finitely supported measure $\alpha\in\mathcal P(\mathbb N^3)$ which is absolutely continuous with respect to $\gamma$, the plan $\alpha$ has minimal cost amongst plans with the same marginals as $\alpha$. Indeed, assume we have a probability measure $\gamma_N\in \mathcal P(\vec X)$ with marginals $\pi_k(\gamma_N)=\mu_k\in \mathcal P(X_k), k=1,\ldots,N$ and such that $\gamma_N$ is an atomic measure with finitely many atoms, i.e. 
\[
 \gamma_N=\frac1\ell\sum_{j=1}^\ell \delta_{\vec x_j} \quad\mbox{and thus}\quad \pi_k(\gamma_N)=\frac1\ell\sum_{j=1}^\ell\delta_{x_j^k}\quad\mbox{and}\quad \langle c,\gamma_N\rangle =\frac1\ell \sum_{j=1}^\ell c(\vec x_j).
\]
Then we can replace $\gamma_N$ by the plan $\tilde\gamma_N:=\vec\sigma(\gamma_N)$ where the action of $(S_\ell)^N$ on atomic probability measures with $\ell$ equal atoms $\mathcal P_\ell(\vec X)$ is inherited from the action on $(\vec X)^\ell$ by composing with the surjective ``empirical measure'' map
\begin{equation}
 (\vec X)^\ell\ni(\vec x_1,\ldots,\vec x_\ell)\mapsto \frac1\ell \sum_{j=1}^\ell \delta_{\vec x_j}\in \mathcal P_\ell(\vec X).
\end{equation}
For $\tilde\gamma_N$, explicitly we have 
\[
\tilde\gamma_N=\frac1\ell\sum_{j=1}^\ell \delta_{(\sigma\cdot\vec x)_j} \quad\mbox{and thus}\quad \pi_k(\tilde\gamma_N)=\frac1\ell\sum_{j=1}^\ell \delta_{x_{\sigma_k(j)}^k}= \pi_k(\gamma_N)\quad\mbox{and}\quad\langle c,\gamma_N\rangle =\frac1\ell \sum_{j=1}^\ell c((\sigma\cdot\vec x)_j).
\]
As is well known (see \cite{frieseckevogler} and the references therein), the above measures are the extremals of the set of probabilities with marginals equal to those of $\gamma_N$, and thus the linear optimization problem \eqref{ot} achieves its minimum on this set. Therefore \eqref{cycmon} implies that $\gamma_N$ has cost lower or equal than any competitor $\tilde \gamma_N=\vec\sigma(\gamma_N)$ in this case.

\medskip

In the case $N=2$ the link between $c$-monotonicity of the support of transport plans and their optimality is well understood, see \cite{pratelli}, \cite{beiglbock}, \cite{teichmair} and \cite{bianchinicaravenna} and the references therein. Recent progress has been made in \cite{Griessler}, \cite{bbbrw} and \cite{depascale} for the case of general $N$, with special focus on the case of costs $c$ coming from potential theory and mathematical physics.

\medskip

However a result at the same level of generality as e.g. \cite{pratelli} or \cite{beiglbock} is missing for $N\ge 3$. The result \cite{pratelli} proves that $c$-monotonicity is equivalent to optimality for the case of general $c$ in the case of atomic marginals, with the only requirement being that there exists a finite-cost plan. A consequence of the counterexample from the present work is that such general result is simply false for any $N\ge 3$, due to phenomena absent in the case $N=2$. This makes the characterization of optimality even more interesting, and a possible source of new mathematics, for $N\ge3$. Note that $\#(S_\ell)^N=(\ell!)^N$ is of exponential growth in both $N$ and $\ell$, thus even in the case of measures with finitely many atoms, condition \eqref{cycmon} is prohibitively hard to test in practice. It is even prohibitive to efficiently store on a computer all the competitors appearing in \eqref{cycmon}. The later problem is addressed in \cite{frieseckevogler}, to which we refer for further references in this direction.

\section{The counterexample}

It has been proved in \cite{pratelli} that for $N=2$ marginals, $c$-cyclical monotonicity is equivalent to optimality of transport plans. The same has been proved under stronger conditions for a general number of marginals, but the question remained open, of whether or not for $N>2$ marginals the no-hypothesis theorem from \cite{pratelli} about the case of atomic measures also holds or if the ergodic approach \cite{beiglbock} can be adapted. On a space $X$, we find:
\begin{equation}\label{counterexample}
 \left\{\begin{array}{l}\mbox{a cost }c:X^N\to (0,+\infty],\\ \mbox{a measure }\mu\in\mathcal P(X)\\ \mbox{a finite-cost plan }\gamma\in\mathcal P_\mathsf{sym}(X^N)\end{array}\right. \mbox{such that }\left\{\begin{array}{l}\gamma\mbox{ is }c\mbox{-cyclically monotone},\\\mbox{the marginal of }\gamma\mbox{ is }\mu\\\gamma\mbox{ is not $c$-optimal.}\end{array}\right.
\end{equation}
\subsection{A simple construction for $c$ taking the value $+\infty$}
\begin{prop}\label{prop:counterexample1}
There exist $(c,\mu,\gamma)$ satisfying \eqref{counterexample} with $X=\mathbb N$ and $N=3$.
\end{prop}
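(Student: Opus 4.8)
The plan is to build the counterexample on $X=\mathbb N$ with $N=3$ by making the cost $c$ take the value $+\infty$ on enough triples that the symmetric plan $\gamma$ we choose is rigid under the operations in \eqref{cycmon} — i.e.\ there is no nontrivial permutation triple $\vec\sigma$ that keeps all rearranged points in $\{c<\infty\}$ — while at the same time some other plan $\gamma'$ with the same marginal $\mu$ has strictly smaller (finite) cost. The point of using $c=+\infty$ is that $c$-cyclical monotonicity of $\gamma$ becomes a purely combinatorial statement: if every $\ell$-tuple drawn from $\operatorname{supp}\gamma\subset\{c<\infty\}$ has the property that any rearrangement $\vec\sigma\cdot\vec x$ with all entries in $\{c<\infty\}$ must actually be a cyclic/trivial reshuffling that leaves the cost sum unchanged, then \eqref{cycmon} holds trivially (the right-hand side is either $+\infty$ or equal to the left-hand side). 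So the first step is to isolate a finite ``gadget'' — a finite subset $S\subset\mathbb N^3$ on which $c$ is finite, designed so that its three coordinate-projections and the induced mass distribution admit a cheaper recombination, yet $S$ itself contains no ``$3$-cycle'' in the sense hinted at in the abstract.

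Concretely, I would take a small finite set of allowed triples, say on the alphabet $\{0,1,2,3,\ldots\}$, and split the mass of $\gamma$ between a ``diagonal-like'' family of triples that is cyclically monotone by fiat, and a finite configuration where the absence of finite-support $3$-cycles forces $c$-cyclical monotonicity. The key asymmetry exploited is that for $N=2$, given two plans with the same marginals one can always interpolate/swap along a bipartite-graph cycle (this underlies \cite{pratelli}), whereas for $N\ge 3$ the analogous ``cycle'' in the tripartite incidence structure need not exist inside a prescribed finite support. I would then set $c\equiv+\infty$ off the chosen finite set together with a countable ``diagonal'' tail (the tail only serves to make $\mu$ a genuine probability measure on all of $\mathbb N$ and to pad marginals if needed), and assign finite positive values to $c$ on the gadget so that: (i) the symmetric plan $\gamma$ supported on the ``expensive but rigid'' configuration has some cost $a$; (ii) a competitor $\gamma'$ with the same one-dimensional marginal $\mu$ (using the other allowed triples) has cost $b<a$. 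Verifying (ii) is just exhibiting $\gamma'$ and computing; verifying that $\gamma$ is $c$-cyclically monotone is the combinatorial heart.

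The main obstacle I expect is exactly that combinatorial verification: one must check that \emph{for every} $\ell$ and \emph{every} triple of permutations $\vec\sigma\in(S_\ell)^3$, any rearrangement of $\ell$ points from $\operatorname{supp}\gamma$ that stays in $\{c<\infty\}$ does not decrease $\sum c$. Because $\#(S_\ell)^3$ grows like $(\ell!)^3$, this cannot be done by brute force; instead I would argue structurally. The right framing is: the finite set $\operatorname{supp}\gamma$, viewed as a subset of $X_1\times X_2\times X_3$, generates (via the coordinate maps and mass-balance) a linear system whose solutions are the competing plans; a strictly cheaper competitor supported again in $\{c<\infty\}$ would correspond to a signed combination of allowed triples summing to zero in each marginal — an ``$N$-cycle'' in the terminology the paper is about to introduce. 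The whole construction is arranged so that no such finite-support $3$-cycle exists within $\{c<\infty\}$ except those that leave the cost unchanged. So the proof reduces to: (a) list the allowed triples; (b) solve the (small, explicit) marginal-balance system to see that the only cost-changing recombinations require triples on which $c=+\infty$; (c) conclude \eqref{cycmon} holds for $\gamma$; (d) separately display the globally cheaper plan $\gamma'$ to defeat optimality. Steps (a), (b), (d) are finite bookkeeping; step (c) — ruling out all finite-support competitors at once rather than checking permutations one by one — is where the real idea, and the link to the $N$-flow/Smirnov-decomposition viewpoint, has to do the work.
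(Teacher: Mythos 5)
Your high-level framing is on target in several respects: the paper does make $c=+\infty$ off a sparse set of triples, does verify $c$-cyclical monotonicity by solving a marginal-balance linear system rather than by enumerating permutations, and does phrase the mechanism (in the later sections) in terms of the absence of finite-support $3$-cycles. However, there is a genuine conceptual gap at the heart of your plan: the ``finite gadget'' idea is self-defeating. If $S\subset\mathbb N^3$ is a finite set on which $c<\infty$ and $S$ supports two plans $\alpha\neq\alpha'$ with the same marginals (``a cheaper recombination''), then $\alpha-\alpha'$ is precisely a nonzero \emph{finite} closed $3$-flow inside $S$ -- the very object you are simultaneously trying to exclude. You cannot have both. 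Indeed, the paper's remark following the proposition states exactly this obstruction: for finite $X$, $c$-cyclical monotonicity implies optimality for every $c$ and every $N$, so any purely finite configuration is doomed.

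The point you are missing is that the infinite tail is not ``padding to make $\mu$ a probability measure''; it is the entire mechanism. The paper takes $\{c<\infty\}$ to be a one-ended infinite chain $(1,1,1),\ (a,a,a+1)^{\mathsf{sym}}$ for $a\ge 1$, puts $\gamma$ on the odd links $(2k-1,2k-1,2k)^{\mathsf{sym}}$ with mass $4^{-k}$, and lets the competitor $\bar\gamma$ ``shift down'' onto $(1,1,1)$ and the even links $(2k,2k,2k+1)^{\mathsf{sym}}$. Because the chain is one-ended and infinite, $\gamma-\bar\gamma$ is a closed $3$-flow of finite mass whose support is infinite and which has \emph{no} finite cyclic subflow (it is ``solenoidal''). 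That is why every finitely supported $\alpha\ll\gamma$ is rigid: the marginal-balance system gives a recursion $b_{k+1}=\frac14 b_k$ for the masses on the even links, and since a finite-support competitor must have $b_k=0$ beyond some $M$, all $b_k$ vanish, forcing $\alpha'=\alpha$. Cut the chain off at any finite point and this rigidity disappears (a finite cycle appears). So the infinitude of the chain, and the way mass decays geometrically along it, are doing all the work that your proposal tried to offload onto a finite gadget; your sketch would need to be rewritten to make the infinite chain the central object rather than a bookkeeping afterthought, and to drop the finite-gadget step entirely.
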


\begin{rmk}
\begin{itemize}
\item \textit{(Other $N\ge 3$)} The case of more than three marginals follows from the above, e.g. by defining a cost $c$ which only depends on three coordinates. 
\item \textit{(Same result on other metric spaces)} The counterexample construction which we will do works on the space $X=\mathbb N$, however $\mathbb N$ can be embedded injectively into another space $Y$ of at least countable cardinality, such as $Y=\mathbb R$, in several ways: any sequence $(a_k)_{k\in\mathbb N}$ such that $a_k\in Y$ are distinct is such an embedding. In our proof only the value of $c$ over the image $A_Y:=\{a_k:\, k\in\mathbb N\}$ of such embedding is used, and thus $c$ could be extended arbitrarily outside $A_Y$. Therefore one can generate a series of counterexamples in any such $Y$ as well. 
\item \textit{(The fact that $X$ is infinite is crucial)} For finite $X$, monotonicity implies optimality of finite-$c$-cost plans, for any $c$ and any $N\ge 2$. This follows by direct verification from the $\ell=\#X$ case of the monotonicity condition \eqref{cycmon}. 
\item \textit{(Growth requirements on the cost)} The proof becomes clearer if we first allow $c=+\infty$ on a large subset of $\mathbb N^3$. So we first provide the proof in this case, and then provide a second proof for finite $c$. 

We cannot have Proposition \ref{prop:counterexample1} to hold for $c$ too tame. Indeed Griessler \cite{Griessler} showed that for $N=3$, if $c(a,b,c)\le f(a)+f(b)+f(c)$ holds $\mu$-almost everywhere for some $f\in L^1(\mu)$, then $c$-cyclical monotonicity implies optimality. The positive result also holds also for general $N$ and can be extended by the De Pascale method \cite{depascaledual} to more singular $c$, see also \cite{depascale}.
\end{itemize}
\end{rmk}

\begin{proof}[{Proof of Proposition \ref{prop:counterexample1}:}]\hfill

\noindent\textbf{A cost taking the value $+\infty$.} The cost $c$ will be defined in terms of an auxiliary bounded function $f:\mathbb N\to(0,1]$ as follows:
\begin{itemize}
 \item $c(a,b,c)$ is symmetric with respect to permutations of the triple $a,b,c$.
 \item $c(1,1,1):=1$.
 \item $c(a,a,a+1):=f(a)$ for $a\ge 1$.
 \item $c(a,b,c):=+\infty$ for all the\ triples $\{a,b,c\}$ not described in the previous two cases.
\end{itemize}
\textbf{Required properties of the function $f$.} We don't need to fully specify the values of $f$ but we require that 
\begin{equation}\label{require_f}
\sum_{k=1}^\infty 4^{-k}(f(2k-1)-f(2k))>\frac16.
\end{equation}
(This is achieved, for example, if $f(1)>f(2)+\frac23$ and $f$ is decreasing.)

\medskip

\noindent\textbf{Two plans and their common marginal.} We define
\begin{eqnarray}
 \mu&:=&\sum_{k=1}^\infty 2^{-k}\delta_k,\\
 \gamma&:=&\sum_{k=1}^\infty4^{-k}\left(\delta_{2k-1,2k-1,2k}+\delta_{2k-1,2k,2k-1}+\delta_{2k,2k-1,2k-1}\right),\\
 \bar\gamma&:=&\frac12 \delta_{1,1,1} + \frac12\sum_{k=1}^\infty 4^{-k}\left(\delta_{2k,2k,2k+1}+\delta_{2k,2k+1,2k}+\delta_{2k+1,2k,2k}\right).
\end{eqnarray}
It is easy to check that $\gamma,\bar\gamma\in\mathcal P_\mathsf{sym}(\mathbb N^3)$ and that $\mu$ is the first marginal of $\gamma$ and of $\bar \gamma$. The two plans $\gamma$ and $\bar\gamma$ have equal marginals because the plans are symmetric. 

\medskip

\noindent\textbf{The plan $\gamma$ is non-optimal.} The $c$-cost of $\bar\gamma$ is lower than the $c$-cost of $\gamma$ due to the properties of $c,f$:
\begin{eqnarray*}
 \langle c,\gamma\rangle - \langle c,\bar\gamma\rangle&=&3\sum_{k=1}^\infty 4^{-k}f(2k-1) - \frac12 -3\sum_{k=1}^\infty4^{-k}f(2k)\\
 &=&3\sum_{k=1}^\infty 4^{-k}(f(2k-1)-f(2k))-\frac12\\
 &\stackrel{\mbox{\eqref{require_f}}}{>}&0.
\end{eqnarray*}
\textbf{The plan $\gamma$ is $c$-cyclically monotone.} To prove the above, we first note that, due to the symmetry of the problem, without loss of generality we may restrict to symmetric measures with atoms corresponding to integers smaller or equal to $M$:
\begin{equation}\label{defalpha}
 \alpha=\sum_{k=1}^M\alpha_k\left(\delta_{2k-1,2k-1,2k}\right)^\mathsf{sym}, \quad\mbox{with}\quad \alpha_k\ge 0, \quad \sum_{j=1}^M \alpha_k=1.
\end{equation}
The marginals of the above $\alpha$ are all equal to
\[
\mu_\alpha:=\sum_{k=1}^M\alpha_k\left(\frac23\,\delta_{2k-1} + \frac13\,\delta_{2k}\right).
\]
We need to show that if $\alpha'\in\mathcal P_\mathsf{sym}(\mathbb N^3)$ is another plan with the same marginals then $\langle c,\alpha'\rangle \ge \langle c, \alpha\rangle$. 

If $\langle c,\alpha'\rangle=+\infty$ then the desired inequality holds, so we are left with the case $\langle c, \alpha'\rangle<+\infty$. In this case $\alpha'$ can be written in the form
\begin{equation*}
 \alpha'=\bar a\,\delta_{1,1,1} + \sum_{k=1}^M\left[a_k\,\left(\delta_{2k-1,2k-1,2k}\right)^\mathsf{sym} + b_k\,\left(\delta_{2k,2k,2k+1}\right)^\mathsf{sym}\right],\quad
 \bar a,a_k,b_k\ge 0, \quad \bar a +\sum_{k=1}^M(a_k+b_k)=1,
\end{equation*}
and the marginal condition on $\alpha'$ translates into
\begin{eqnarray}\label{alpha1_bara}
 2\alpha_1&=& 3\bar a +2 a_1\nonumber\\
 2\alpha_k&=& 2a_k + b_{k-1},\quad \mbox{for}\quad k\ge 2,\nonumber\\
 \alpha_k&=& a_k +2 b_k,\quad \mbox{for}\quad k\ge 1.\label{alphak}
\end{eqnarray}
Note that the equations \eqref{alphak} imply that
\[
 a_k +2b_k=\alpha_k = a_k+\frac{b_{k-1}}{2},\quad\mbox{for}\quad k\ge 2,
\]
This means that $b_{k+1}= \frac14 b_k$ for $k\ge 1$. For $k> M$ we have $b_k=0$ thus we find $b_k=0$ for all $k\ge 1$, and by \eqref{alphak} we get $\alpha_k=a_k$ for all $k$, which inserted in \eqref{alpha1_bara} gives $\bar a =0$. Thus $\alpha'=\alpha$ is the only symmetric plan with marginals equal to $\alpha$ and finite cost. As this is true for every $\alpha\ll\gamma$ with finitely many atoms, we have that $\gamma$ is $c$-cyclically monotone, as desired.
\end{proof}

\subsection{About the existence of everywhere finite $c$}
In the search for a counterexample $c$ which is now everywhere finite, we first consider the problem abstractly, in a very general setting.

\medskip 

Assume that there exist $\gamma, \overline\gamma$ finite-cost transport plans, such that $\gamma$ is non-optimal and cyclically monotone, and $\overline\gamma$ is optimal. Then we consider the following sets of measures:
\begin{eqnarray*}
 \mathcal F_\gamma&:=&\left\{\alpha \in \mathcal P(\vec X),\ \#\left(\mathrm{spt}(\alpha)\right)<\infty, \ \alpha\ll \gamma\right\},\\
 \overline{\mathcal F}_\gamma&:=&\left\{\alpha'-\alpha:\ \alpha\in \mathcal F_\gamma, \alpha'\in\mathcal P(\vec X), (\pi_j)_\#\alpha=(\pi_j)_\#\alpha'\mbox{ for }j=1,\ldots,N\right\}.
 \end{eqnarray*}
Note that $\mathcal F_\gamma$ is composed of measures with finite-cardinality support by definition, and $\overline{\mathcal F}_\gamma$ is also composed of measures of finite support, because $\#\mathrm{spt}((\pi_j)_\#\alpha)\le \#\mathrm{spt}(\alpha)$ for all $j$ and $\#\mathrm{spt}(\alpha')\le \prod_j \#\mathrm{spt}((\pi_j)_\#\alpha)$. Then the set of ``counterexample'' costs for which $\gamma$ is cyclically monotone, but $\overline\gamma$ has better cost than $\gamma$, is given by
\[
 \mathcal C_\mathrm{cex}:=\left\{\tilde c:\vec X\to[0,+\infty]:\ \langle \tilde c, \gamma -\overline\gamma\rangle > 0,\quad \forall\ \alpha'-\alpha\in\overline{\mathcal F_\gamma}, \quad\langle\tilde c, \alpha'-\alpha\rangle\ge 0\right\}.
\]
Our assumption that a counterexample cost exists (which we showed for $\vec X=\mathbb N^N, N\ge 3$), means that $\mathcal C_\mathrm{cex}\neq\emptyset$, and we now consider the question of whether or not 
\[\mathcal C_\mathrm{cex}\cap\{\tilde c:\vec X\to [0,+\infty]: \ \forall \vec x\in\vec X,\ c(\vec x)<+\infty\}\neq\emptyset
\]
as well. Note that $\mathcal F_\gamma$ and $\overline{\mathcal F}_\gamma$ are convex, but not closed under weak-$*$ convergence of measures since $\gamma$ belongs to the closure of $\mathcal F_\gamma$ but not to $\mathcal F_\gamma$, and $-(\gamma-\overline\gamma)$ is in the closure of $\overline{\mathcal F}_\gamma$ but not in $\overline{\mathcal F}_\gamma$. As $\mathcal C_\mathrm{cex}$ is composed on functionals of the form $\langle \tilde c, \cdot\rangle$ which are required to be nonpositive on $\overline{\mathcal F}_\gamma$ and negative on $-(\gamma-\overline\gamma)$, this directly shows that all $\tilde c\in \mathcal C_\mathrm{cex}$ are not continuous under weak-$*$ convergence:
\begin{lemma}
 Let $\vec X=X_1\times\cdots\times X_N$ and $X_1,\ldots,X_N$ be Polish spaces, and assume that $c:\vec X\to (-\infty,+\infty]$ is a cost for which there exist $\gamma\in \mathcal P(\vec X)$ of finite cost, which is is cyclically monotone and not optimal. Then the assignment $\gamma\mapsto\langle c,\gamma\rangle$ defined on $\left\{\gamma\in\mathcal P(\vec X):\ \langle c,\gamma\rangle\in (-\infty,+\infty]\right\}$ is not continuous with respect to weak-$*$ convergence.
\end{lemma}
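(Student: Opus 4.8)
The plan is to argue by contradiction, directly exploiting the structure that was already set up in the discussion preceding the lemma. Suppose the functional $\gamma\mapsto\langle c,\gamma\rangle$ were weak-$*$ continuous on $\{\gamma:\langle c,\gamma\rangle\in(-\infty,+\infty]\}$. Since $c$ is cyclically monotone and not optimal, there is an optimal plan $\overline\gamma$ (or at least a competitor of strictly smaller cost, which suffices) with the same marginals as $\gamma$, so that $\langle c,\gamma-\overline\gamma\rangle>0$. The key observation, which is essentially recorded in the excerpt, is that $\gamma$ lies in the weak-$*$ closure of $\mathcal F_\gamma$, the set of finitely supported measures $\alpha\ll\gamma$: indeed one takes $\alpha^{(M)}$ to be $\gamma$ restricted to its first $M$ atoms (or, if $\gamma$ is not purely atomic, restricted to a large compact set) and renormalized; then $\alpha^{(M)}\to\gamma$ weak-$*$. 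Likewise, $-(\gamma-\overline\gamma)$ lies in the weak-$*$ closure of $\overline{\mathcal F}_\gamma$.

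Next I would turn cyclical monotonicity into the inequality $\langle c,\alpha'-\alpha\rangle\ge 0$ for every $\alpha'-\alpha\in\overline{\mathcal F}_\gamma$: this is precisely the equivalent formulation of $c$-cyclical monotonicity recalled in the introduction (a finitely supported $\alpha\ll\gamma$ has minimal cost among plans $\alpha'$ with the same marginals). Care is needed so that the expression $\langle c,\alpha'-\alpha\rangle$ is well defined: since $\alpha\in\mathcal F_\gamma$ has finitely many atoms and $\gamma$ has finite cost, $\langle c,\alpha\rangle<+\infty$; if $\langle c,\alpha'\rangle=+\infty$ the inequality is trivial, and otherwise the difference is a genuine real number. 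So on all of $\overline{\mathcal F}_\gamma$ we have $\langle c,\cdot\rangle\ge 0$.

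Now I would derive the contradiction. Apply the assumed continuity to the sequence $-(\gamma-\overline\gamma)=\lim_M(\alpha'^{(M)}-\alpha^{(M)})$ with $\alpha'^{(M)}-\alpha^{(M)}\in\overline{\mathcal F}_\gamma$; continuity gives $\langle c,-(\gamma-\overline\gamma)\rangle=\lim_M\langle c,\alpha'^{(M)}-\alpha^{(M)}\rangle\ge 0$, i.e. $\langle c,\gamma-\overline\gamma\rangle\le 0$, contradicting $\langle c,\gamma-\overline\gamma\rangle>0$. This closes the argument. Equivalently and perhaps more cleanly, one can avoid passing through $\overline{\mathcal F}_\gamma$ and instead apply continuity separately: write $\langle c,\overline\gamma\rangle=\lim_M\langle c,\beta^{(M)}\rangle$ where $\beta^{(M)}$ are finitely supported approximations of $\overline\gamma$ obtained by truncating-and-renormalizing $\overline\gamma$, and $\langle c,\gamma\rangle=\lim_M\langle c,\alpha^{(M)}\rangle$ with $\alpha^{(M)}\ll\gamma$; after matching the truncations so that $\alpha^{(M)}$ and $\beta^{(M)}$ have the same marginals (a small bookkeeping step using that $\gamma,\overline\gamma$ share their marginals), cyclical monotonicity gives $\langle c,\alpha^{(M)}\rangle\le\langle c,\beta^{(M)}\rangle$, and passing to the limit yields $\langle c,\gamma\rangle\le\langle c,\overline\gamma\rangle$, again a contradiction.

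The main obstacle I anticipate is not conceptual but technical: making sure the truncation-and-renormalization approximations can be chosen so that $\alpha^{(M)}$ and the approximation of $\overline\gamma$ have \emph{exactly} the same marginals at each finite stage — in general truncating $\gamma$ and $\overline\gamma$ to the same compact set need not preserve the equality of marginals. The formulation via $\overline{\mathcal F}_\gamma$ sidesteps this, since one only needs $-(\gamma-\overline\gamma)$ to be a weak-$*$ limit of elements of $\overline{\mathcal F}_\gamma$, and any element $\alpha'-\alpha$ there automatically has matching marginals by definition; the price is checking that such a sequence exists, which is exactly the closure statement about $\overline{\mathcal F}_\gamma$ already flagged in the text. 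A secondary point to handle carefully is the domain of the continuity hypothesis: all measures involved must have cost in $(-\infty,+\infty]$, which holds because $c$ is bounded below on the relevant supports (here $c\ge 0$) and $\gamma,\overline\gamma$ have finite cost, so every truncation has finite cost as well.
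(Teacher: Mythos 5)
Your argument is correct and is essentially the paper's own proof: the lemma is justified there precisely by the preceding discussion, namely that $c$-cyclical monotonicity forces $\langle c,\cdot\rangle\ge 0$ on $\overline{\mathcal F}_\gamma$, non-optimality gives $\langle c,\gamma-\overline\gamma\rangle>0$, and $-(\gamma-\overline\gamma)$ lies in the weak-$*$ closure of $\overline{\mathcal F}_\gamma$, so $\langle c,\cdot\rangle$ cannot be weak-$*$ continuous. The closure assertion you flag as the only delicate step is likewise stated without proof in the paper, so your proposal matches both the paper's route and its level of rigor.
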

Nevertheless, we cannot exclude that there exist counterexamples $c$ which are everywhere finite:
\begin{oprob}
Do there exist Polish spaces $X_1,\ldots,X_N$ and a cost function $c:\vec X\to\mathbb R$ for which there exists a $c$-cyclically monotone non-optimal transport plan $\gamma\in\mathcal P(\vec X)$?
\end{oprob}

\section{Cyclical monotonicity and $N$-flows}
\subsection{Definition of $N$-graphs ad $N$-flows}
We give here a principle which allows to construct more complicated counterexamples with $c$ taking the value $+\infty$. The idea is that we can interpret $X^N$ as a (continuous or discrete) directed hypergraph and $c:X^N\to(-\infty,+\infty]$ as a weight on $X^N$. 

\medskip

\noindent\textbf{The case $N=2$.} The simplest case $N=2$ gives usual graphs: We then associate to $c:X^2\to(-\infty, +\infty]$ the weighted directed graph
\[
 G=(V,E,w)\quad\mbox{where}\quad \left\{\begin{array}{l}V:=X,\\ E:=\{(x,y):\, x,y\in X,\, c(x,y)<+\infty\},\\ w:E\to\mathbb R\mbox{ is given by }w(x,y):=c(x,y).\end{array}\right.
\]
Here we obtain usual (countable) weighted graphs if $X$ is countable, and a generalization thereof otherwise. We will mostly concern ourselves with atomic measures and discrete spaces $X$ for the time being.

\medskip

\noindent\textbf{The $3$-graph associated to our counterexample.} Weighted directed hypergraphs are defined analogously with directed edges replaced by $N$-ples of points in $X$. For example, the construction from the previous section gives a $3$-graph with vertex set given by three copies of $\mathbb N$ 
\[
 V:=\mathbb N\times\{1,2,3\}.
\]
If we abbreviate the $3$-edge $((k,1),(l,2),(m,3))\in V^3$ by $(k,l,m)$ then the hypergraph relevant to the previous section has $3$-edge set (see Figure \ref{fig:graph})
\[
 E:=\{(1,1,1)\}\cup\{(k,k,k+1), (k,k+1,k), (k+1,k,k):\, k\in\mathbb N\}.
\]
\begin{figure}\label{fig:graph}
 \includegraphics[height=6cm]{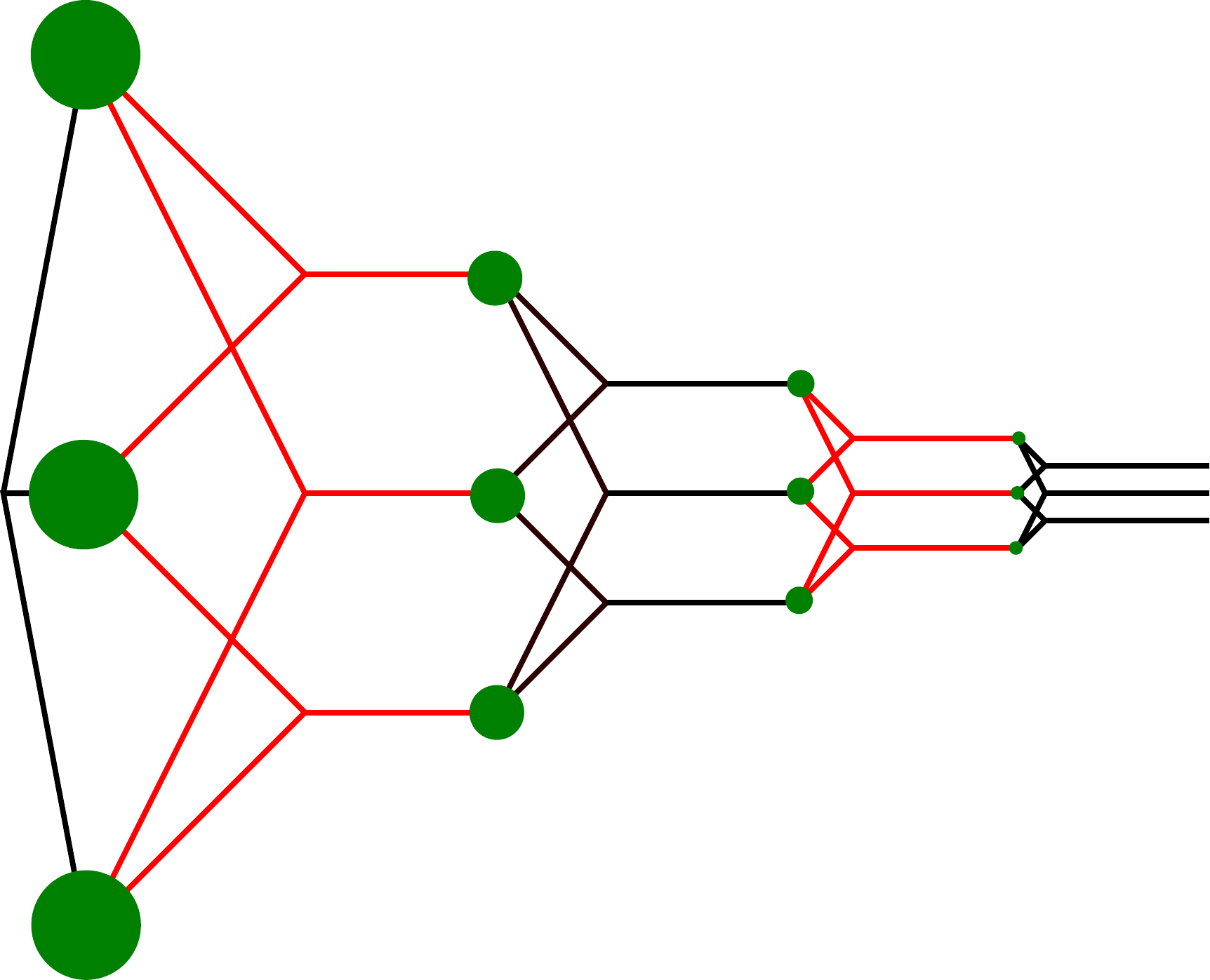}
 \caption{We depict here part of the $3$-graph associated to the cost $c$ from the counterexample we constructed in Proposition \ref{prop:counterexample1}. Here each triple $(a,b,c)$ such that $c(a,b,c)<+\infty$ (which gives an edge of the $3$-graph) is denoted by a triod, and the green points correspond to elements in $V$. The black triods correspond to the support of the transport plan $\bar\gamma$ and the red ones correspond to the support of $\gamma$. The vertical diameter of triods and circles are roughly representing the weights that corresponding points are given in $\mu, \bar\gamma$ and $\gamma$.}
\end{figure}

Anytime we study optimal transport with the cost $c$ from the previous section, the only edges which matter to our problem are the edges in $E$, due to $c$ being infinite on couplings outside $E$. This is why we can say that the $3$-graph $(V,E)$ with weight given by $c$ encodes our problem.

\medskip

\noindent\textbf{Definition of $N$-graphs and $N$-flows.} More generally, if we consider $N$-marginal optimal transport on the set $X$, we have vertex set $\tilde X_N:=X\times\{1,\ldots,N\}$ and $N$-edges set $\tilde E_N$ given by the $N$-ples $((x_1,1),\ldots,(x_N,N))$ such that $c(x_1,\ldots,x_N)\in\mathbb R$. Such edges will again be denoted $(x_1,\ldots,x_N)$, forgetting the second index, for simplicity. Again we put weight according to $c$ on $N$-edges. We call $(\tilde X_N, \tilde E_N, w)$ the \emph{$N$-graph associated} to the transport cost $c:X^N\to(-\infty,+\infty]$. Note that this graph could possibly be uncountable. See Figure \ref{fig:4graph} for an example of $4$-graph.

\begin{figure}\label{fig:4graph}
 \includegraphics[height=5cm]{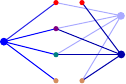}
 \caption{We show here a $4$-graph with three $4$-edges. We indicate the points belonging to $X_1,X_2,X_3,X_4$ by different colors (red, purple, green, orange), and the $4$-edges of the graph are encoded by ``spider graphs'' with $4$ legs, in different shades of blue.}
\end{figure}

To a signed weight $m:\tilde E_N\to\mathbb R$ and a subset of edges $A\subset \tilde E_N$ we can associate an \emph{$N$-flow}, which is the formal sum
\[
[A]:=\sum_{a\in A}m(a)[a].
\]
It will be useful later to also introduce the \emph{mass of the $N$-flow} $[A]$ above, defined as
\[
|[A]|:=\sum_{a\in A}|m(a)|.
\]
Given $c:\tilde E_N\to(-\infty,+\infty]$ and a finite $N$-flow, we can \emph{integrate $c$ on $[A]$} (the integral is set to be $+\infty$ if $c$ takes the value $+\infty$ on $A$):
\[
 \langle c, [A]\rangle:=\sum_{a\in A}m(a)c(a).
\]
To an $N$-flow $[A]$ as above we associate its \emph{$N$-boundary}, which is an $N$-ple of atomic measures given by 
\[
 \partial^{(N)}[A]:=\left(\mu_1,\ldots,\mu_N\right),\quad\mu_i:=\sum_{a=(a_1,\ldots,a_N)\in A}m(a)\delta_{a_i},
\]
whenever the sum is defined (in particular it is, if $A$ is finite). If $\vec A$ has $\partial^{(N)}\vec A=0$ then we say that $\vec A$ is \emph{closed} (see Figure \ref{fig:3flow} for an example).
\begin{figure}\label{fig:3flow}
 \includegraphics[height=5cm]{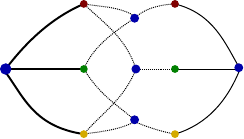}
 \caption{We show here an example of a closed $3$-flow with over a $3$-graph with 5 edges. The $3$-edges are represented by $3$-legged spider graphs similarly to Figure \ref{fig:4graph}. To visually encode the weights on the $3$-graph, we use thickness to represent the absolute value of the weights, and dashed edges correspond to negative weights while non-dashed ones correspond to positive weights. Here the thick $3$-spider has weight $1$, the dashed thin ones have weights $-1/2$ and the non-dashed thin one has weight $1/2$. Again the color blue is reserved to $3$-edges, while the other colors (dark red, green and yellow) encode to which one of $X_1,X_2,X_3$ the different endpoints of the $3$-edges belong.} 
\end{figure}

\begin{rmk}
 We could easily generalize and adapt the above definitions to the case that $m$ is a measure over $\tilde E_N$, in case the latter is infinite, replacing the sums in our definitions by suitable integrals. However, defining $c$-cyclical monotonicity does not \emph{per se} require us to talk about measures, as the definition \eqref{cycmon} is be formulated in purely combinatorial terms. Indeed, it turns out that the case in which the support $A$ of $[A]$ is finite is all we need, and this allows us to avoid discussing measurability issues which would make the discussion superfluously complicated.
\end{rmk}
\subsection{Reformulating $c$-cyclical monotonicity in terms of $N$-flows} 
We are now ready for a combinatorial reformulation of the monotonicity condition \eqref{cycmon}, which allows us to describe more complicated counterexamples. It says that a set is $c$-monotone if and only if its flows are cost-minimizers at fixed boundary.
\begin{prop}\label{prop:nflow_charact}
A set $A\subset \vec X$ is $c$-monotone if and only if whenever $F\subset A$ is finite, $[F]$ and $[F']$ are finite-mass $N$-flows in the $N$-graph associated to $c$ such that $[F]$ is supported on $F$ and $\partial^{(N)}([F]-[F'])=0$, then there also holds $\langle c,[F]-[F']\rangle\le 0$.
\end{prop}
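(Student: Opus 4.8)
The statement is essentially a dictionary between the permutation-based definition \eqref{cycmon} of $c$-monotonicity and the flow-theoretic language just introduced, so the plan is to prove the two implications by matching up the combinatorial data on each side. The key observation is that an $N$-flow $[F]$ with nonnegative integer weights $m$, say $[F]=\sum_{a\in F}m(a)[a]$, records exactly the multiset of edges appearing in a list $\vec x_1,\ldots,\vec x_\ell$ (with $\ell=\sum_a m(a)$), and its boundary $\partial^{(N)}[F]=(\mu_1,\ldots,\mu_N)$ records, for each coordinate $i$, the multiset of points $x_j^i$ appearing. Two lists $\vec x$ and $\vec\sigma\cdot\vec x$ as in \eqref{cycmon} have, by construction, the same coordinatewise multisets of points in every coordinate; conversely, two edge-multisets with the same coordinatewise point-multisets differ by a simultaneous rearrangement of the entries in each coordinate, i.e.\ by an action of some $\vec\sigma\in(S_\ell)^N$. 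This is the combinatorial heart of the equivalence.

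\textbf{From \eqref{cycmon} to the flow statement.} Assume $A$ is $c$-monotone, let $F\subset A$ be finite, and let $[F]$ (supported on $F$), $[F']$ be finite-mass $N$-flows with $\partial^{(N)}([F]-[F'])=0$. First I would reduce to nonnegative rational, hence integer, weights: split $[F]-[F']$ into positive and negative parts, noting that $\partial^{(N)}$ is linear so the positive part and the negative part have the same $N$-boundary, and the positive part is supported on $F\subset A$ (after cancellation, the edges of $F$ surviving with positive coefficient still lie in $A$, while $F'$ may contain arbitrary finite edges). By homogeneity of $\langle c,\cdot\rangle$ under scaling, approximate the weights by rationals and clear denominators to get two integer-weighted flows $[P]$, $[P']$ with $[P]-[P']=[F]-[F']$ up to a positive scalar, $[P]$ supported on $F$, and $\partial^{(N)}[P]=\partial^{(N)}[P']$. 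Now $[P]$ corresponds to an $\ell$-list $\vec x$ with all $\vec x_j\in A$, and the equality of boundaries lets me pick $\vec\sigma\in(S_\ell)^N$ with $\vec\sigma\cdot\vec x$ the list corresponding to $[P']$; then \eqref{cycmon} gives $\langle c,[P]\rangle=\sum c(\vec x_j)\le\sum c((\vec\sigma\cdot\vec x)_j)=\langle c,[P']\rangle$, i.e.\ $\langle c,[P]-[P']\rangle\le 0$, hence $\langle c,[F]-[F']\rangle\le 0$. (If $\langle c,[F']\rangle=+\infty$ the inequality is trivial, so one may assume all relevant edges have finite cost.)

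\textbf{From the flow statement to \eqref{cycmon}.} Conversely, given $\ell$, $\vec\sigma\in(S_\ell)^N$, and $\vec x=(\vec x_1,\ldots,\vec x_\ell)$ with each $\vec x_j\in A$, set $F:=\{\vec x_1,\ldots,\vec x_\ell\}$ and let $[F]:=\sum_{j=1}^\ell[\vec x_j]$ (combining repeated edges with their multiplicities) and $[F']:=\sum_{j=1}^\ell[(\vec\sigma\cdot\vec x)_j]$. By the multiset observation above, $\partial^{(N)}[F]=\partial^{(N)}[F']$, so $\partial^{(N)}([F]-[F'])=0$, $[F]$ is supported on $F\subset A$, and both flows have finite mass; the hypothesis yields $\langle c,[F]-[F']\rangle\le 0$, which is exactly \eqref{cycmon}. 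Here one should note that if some $c(\vec x_j)=+\infty$ then \eqref{cycmon} holds vacuously, so we may assume $[F]$ has finite cost, and if some $c((\vec\sigma\cdot\vec x)_j)=+\infty$ the inequality is again immediate.

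\textbf{Main obstacle.} The only genuinely delicate point is the reduction, in the forward direction, from a real-weighted flow identity $\partial^{(N)}([F]-[F'])=0$ to an honest permutation relation between two equal-length lists of edges: one must handle the cancellation between $[F]$ and $[F']$ (so that the surviving positive part stays inside $A$), the passage to rational and then integer weights (using linearity of $\partial^{(N)}$ and scale-invariance of the sign of $\langle c,\cdot\rangle$), and the bookkeeping that equal $N$-boundaries of two integer edge-multisets of the same total size really do come from a single element of $(S_\ell)^N$ — this last fact is where the definition of $\partial^{(N)}$ as the coordinatewise pushforward is used in an essential way. Everything else is a routine translation of notation.
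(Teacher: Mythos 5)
Your proof is correct and follows essentially the same route as the paper's: translate \eqref{cycmon} into the statement ``for every finitely-supported measure $\alpha$ on $A$ and every $\alpha'$ with the same marginals, $\langle c,\alpha-\alpha'\rangle\le 0$'', then observe that this is exactly the flow condition, since measures become nonnegative $N$-flows and the equal-marginals condition becomes $\partial^{(N)}([F]-[F'])=0$. The paper simply \emph{cites} the first equivalence (it is the Birkhoff--von Neumann/extreme-point observation already discussed in the introduction, attributed there to Griessler) and treats the rest as a dictionary; you instead prove that equivalence explicitly, via the positive/negative-part split, rational approximation with a common clearing of denominators, and the key combinatorial fact that two integer edge-multisets with equal coordinatewise boundaries are related by some $\vec\sigma\in(S_\ell)^N$ (obtained coordinate by coordinate). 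Your route amounts to an explicit proof of the extremality fact the paper invokes, so the mathematical content is the same, just with more detail supplied. Two small imprecisions worth tidying: (i) your decomposition argument silently uses that both $[F]$ and $[F']$ have nonnegative weights --- this matches the paper's intent (they come from measures $\alpha,\alpha'$), but the proposition as literally stated does not say so, and you should make the assumption explicit before invoking that the positive part of $[F]-[F']$ sits inside $F$; (ii) the remark ``if some $c(\vec x_j)=+\infty$ then \eqref{cycmon} holds vacuously'' is not quite right as stated (the right-hand side could be finite); the clean fix is that flows live in the $N$-graph, where $c<+\infty$ by definition, so this case does not arise once the hypotheses are in force.
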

\begin{proof}
The proof is an easy verification, based on the following fact: an equivalent condition for $A$ to be $c$-monotone is that for any measure $\alpha$ supported on a finite subset $F\subset A$ and any $\alpha'$ with marginals equal to those of $\alpha$, there holds $\langle c,\alpha-\alpha'\rangle\le 0$. The measure $\alpha$ support included in the finite set $F$ implies that $\alpha=\sum_{f\in F}\alpha(\{f\})\delta_f$. We can thus associate the $N$-flow $[F]:=\sum_{f\in F}\alpha(\{f\})[f]$ to $\alpha$. Similarly, since $\alpha'$ has atomic marginals it is itself atomic too and we can associate another $N$-flow $[F']$ to $\alpha'$ as above. The condition of $\alpha'=\alpha$ is equivalent to requiring $\partial^{(N)}([F]-[F'])=0$ as can be easily verified, and the condition $\langle c,\alpha-\alpha'\rangle\le 0$ is equivalent to $\langle c,[F]-[F']\rangle\le 0$.
\end{proof}
The counterexample from the previous section is generalized by the following:
\begin{cor}\label{cor:acyclic}
 If the $N$-graph associated to $c$ supports no finite closed $N$-flows then all transport plans of finite $c$-cost are $c$-montone.
\end{cor}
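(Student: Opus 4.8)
The plan is to read this off from the $N$-flow characterization of $c$-monotonicity in Proposition \ref{prop:nflow_charact}, taking for $A$ the whole ``finiteness locus'' of $c$, i.e.\ the edge set $\tilde E_N$ of the $N$-graph associated to $c$, viewed as a subset of $\vec X$. Two things then need to be checked: that $\tilde E_N$ is $c$-monotone under the hypothesis, and that every transport plan of finite $c$-cost is concentrated on $\tilde E_N$.

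For the first point I would apply Proposition \ref{prop:nflow_charact} with $A=\tilde E_N$. Let $F\subset\tilde E_N$ be finite and let $[F]$, $[F']$ be finite-mass $N$-flows in the $N$-graph associated to $c$ with $[F]$ supported on $F$ and $\partial^{(N)}([F]-[F'])=0$. Then $[F]-[F']$ is a finite-mass $N$-flow supported on $\tilde E_N$ with vanishing $N$-boundary, i.e.\ a finite closed $N$-flow; by hypothesis the only such flow is the zero flow (the zero flow always being closed, ``no finite closed $N$-flows'' must be read as ``no nonzero one''). Hence $[F]=[F']$ and $\langle c,[F]-[F']\rangle=0\le 0$, so Proposition \ref{prop:nflow_charact} gives that $\tilde E_N$ is $c$-monotone.

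For the second point, let $\gamma\in\mathcal P(\vec X)$ with $\langle c,\gamma\rangle<+\infty$. By the reformulation used in the proof of Proposition \ref{prop:nflow_charact}, $\gamma$ is $c$-monotone as soon as every finitely supported $\alpha\ll\gamma$ is cost-minimal among plans with the same marginals; so fix such an $\alpha$ and a competitor $\alpha'$ with $(\pi_j)_\#\alpha'=(\pi_j)_\#\alpha$ for all $j$, assuming $\langle c,\alpha'\rangle<+\infty$ (otherwise there is nothing to prove). Since $\alpha$ is finitely supported its marginals are supported on finite sets $S_j\subset X_j$, so $\alpha'$ is concentrated on the finite product $S_1\times\cdots\times S_N$ and is therefore finitely supported too — the same counting used earlier for $\overline{\mathcal F}_\gamma$. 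Each atom of $\alpha$ carries positive $\gamma$-mass, so $c<+\infty$ there (else $\langle c,\gamma\rangle=+\infty$), and $c<+\infty$ on the atoms of $\alpha'$ because $\langle c,\alpha'\rangle<+\infty$ with $\alpha'$ finite; thus $\mathrm{spt}(\alpha),\mathrm{spt}(\alpha')\subset\tilde E_N$. So $\alpha$ and $\alpha'$ are finitely supported measures on the $c$-monotone set $\tilde E_N$ with equal marginals, and $c$-monotonicity of $\tilde E_N$ yields $\langle c,\alpha-\alpha'\rangle\le 0$ (in fact, by the first point, $\alpha=\alpha'$). Hence $\gamma$ is $c$-monotone.

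I do not expect a real obstacle: the content is entirely in Proposition \ref{prop:nflow_charact}. The only points to treat carefully are the convention that ``no finite closed $N$-flows'' excludes only nonzero flows, the automatic finite support of the competitor $\alpha'$, and the remark that a finite-cost plan and all the finitely supported measures used to test monotonicity live on the $N$-graph $\tilde E_N$, so that Proposition \ref{prop:nflow_charact} applies with $A=\tilde E_N$.
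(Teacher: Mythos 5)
Your proof is correct and takes essentially the same route as the paper: apply Proposition \ref{prop:nflow_charact} with $A=\tilde E_N=\{c<+\infty\}$, so that the no-finite-closed-$N$-flows hypothesis forces $[F]-[F']$ to vanish, and then observe that finite-cost plans and their finitely supported test measures live on $\tilde E_N$. You are in fact slightly more careful than the paper's own argument, which asserts that $\partial^{(N)}([F]-[F'])=0$ is ``never verified'' and thereby overlooks the trivial case $[F]=[F']$; your remark that the only remaining closed flow is the zero flow, for which $\langle c,0\rangle=0\le 0$, patches this small slip.
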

\begin{proof}
Let $\gamma$ be a transport plan with $\gamma(\{c<+\infty\})=1$. We claim that the condition $\partial^{(N)}([F]-[F'])=0$ is never verified for $[F]$ of finite support contained in the set $\{c<+\infty\}$. Indeed, if $[F]$ has finite support then so does $\partial^{(N)}[F]=-\partial^{(N)}[F']$, and then $[F']$ also has finite support, and thus $[F]-[F']$ would be a finite closed $N$-flow supported on $\{c<+\infty\}$, which is precluded by our hypothesis. The condition of Proposition \ref{prop:nflow_charact} is then automatically verified.
\end{proof}
\subsection{Special properties available for the case $N=2$}
Before discussing some necessary and some sufficient conditions for the existence of $N$-flows, it is instructive to consider the case $N=2$, reinterpreting the proof methods \cite{pratelli} and \cite{beiglbock}. 

\medskip

We define a \emph{$2$-flow-path} to be any $2$-flow of the form 
\[
 [A]=\sum_{i=0}^N\left(\left[\left(a_1^{(j)},a_2^{(j)}\right)\right] - \left[\left(a_1^{(j+1)},a_2^{(j)}\right)\right]\right)\quad\mbox{or}\quad[A]=\sum_{i=0}^N\left(\left[\left(a_1^{(j)},a_2^{(j)}\right)\right] - \left[\left(a_1^{(j)},a_2^{(j+1)}\right)\right]\right),
\]
where $m\in\mathbb R$, $N\in\mathbb N$ and $a_1^{(j)}\in X_1$ and $a_2^{(j)}\in X_2$ for $j=0,\ldots,N$. If $a_1^{(N+1)}=a_1^{(0)}$ or respectively $a_2^{(j+1)}=a_2^{(0)}$ in the above but no other pair of points coincide, then we call $[A]$ a \emph{$2$-flow-loop}.

\medskip

For the next definition, let $[A],[A_1],[A_2]$ be three $2$-flows such that $[A]=[A_1]+[A_2]$. In general we have $|[A]|\le|[A_1]|+|[A_2]|$. If the equality sign holds in the latter inequality, we say that $[A_1],[A_2]$ form a \emph{decomposition of $[A]$ wihtout cancellations}.

\medskip

It is easy to see that if a closed $2$-flow has the same support as $[A]$ above, then automatically it is a nonzero multiple of $[A]$. The principle that ``mass is conserved along $2$-flow-loops'' (which lacks an easy analogue for $N$-flows if $N>2$, as a consequence of the counterexample in the proof of proposition \ref{prop:counterexample1}) helps us to prove the following:
\begin{lemma}\label{lem:fin2flow}
 A finite $2$-flow $[A]$ closed if and only if $[A]$ is a finite sum of weighted $2$-flow-loops without cancellations.
\end{lemma}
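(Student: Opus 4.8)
\textbf{Proof plan for Lemma \ref{lem:fin2flow}.}

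The plan is to prove the two implications separately, with the forward direction (closed $\Rightarrow$ sum of loops without cancellations) being the substantive one. For the easy direction, I would observe that each $2$-flow-loop is itself closed: summing $\left[\left(a_1^{(j)},a_2^{(j)}\right)\right] - \left[\left(a_1^{(j+1)},a_2^{(j)}\right)\right]$ over $j$ telescopes in the first marginal (giving $\delta_{a_1^{(0)}}-\delta_{a_1^{(N+1)}}=0$ by the loop-closing condition $a_1^{(N+1)}=a_1^{(0)}$) and cancels in pairs in the second marginal (each $a_2^{(j)}$ appears with $+m$ and $-m$). Since $\partial^{(2)}$ is linear, any finite sum of weighted $2$-flow-loops is closed; the ``without cancellations'' hypothesis is not even needed here.

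For the forward direction, I would argue by induction on the mass $|[A]|$, or equivalently on the number of edges in the support of $[A]$, peeling off one loop at a time. Given a nonzero finite closed $2$-flow $[A]=\sum_{a\in A}m(a)[a]$, pick any edge $e_0=(x_0,y_0)$ with $m(e_0)\neq 0$. Because $\partial^{(2)}[A]=0$, the coefficient of $\delta_{y_0}$ in the second marginal vanishes, so there must be another edge $e_1=(x_1,y_0)$ sharing the vertex $y_0\in X_2$ whose coefficient has the opposite sign relative to how it enters that marginal balance; similarly, balancing the first marginal at $x_1$ forces an edge $e_2=(x_1,y_1)$, and so on. Since $A$ is finite, this walk through the bipartite structure of the $2$-graph must eventually revisit a vertex, producing a closed sub-walk which, read off as an alternating sum of edges, is a $2$-flow-loop $[L]$ (in one of the two orientations). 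I would then set the weight $m$ of this loop to be the signed coefficient of smallest absolute value among the edges traversed, chosen with the correct sign so that $[A]-m[L]$ still has all coefficients consistent and, crucially, $|[A]-m[L]|=|[A]|-|m[L]|$, i.e.\ no cancellation occurs and at least one edge is eliminated. The flow $[A]-m[L]$ is again finite and closed (difference of closed flows), so by induction it decomposes as a sum of weighted $2$-flow-loops without cancellations, and appending $m[L]$ keeps the decomposition cancellation-free by the choice of $m$.

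The main obstacle is the bookkeeping in the inductive step: one must extract a genuine $2$-flow-loop (satisfying the requirement that \emph{no pair of points coincide} except the closing pair) rather than a more degenerate closed sub-walk, and one must verify that subtracting the chosen multiple really does reduce the support while preserving closedness and avoiding cancellation. Degenerate self-intersections of the walk are handled by choosing the \emph{first} repeated vertex, so that the extracted cycle is simple; and the sign/magnitude of $m$ is dictated precisely by the requirement $|[A]-m[L]| = |[A]| - |m[L]|$, which is what ``without cancellations'' means and what makes the induction terminate. I expect the alternation between the two marginal-balance conditions (at $X_1$-vertices versus $X_2$-vertices) — reflecting the two shapes of $2$-flow-loop allowed in the definition — to require a small case analysis but no real difficulty, since it is exactly the bipartite cycle structure that the $N=2$ case affords and that fails for $N\ge 3$.
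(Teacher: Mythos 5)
Your proposal is correct and follows essentially the same strategy as the paper: build a walk by repeatedly applying the Kirchhoff balance $\partial^{(2)}[A]=0$ at alternating $X_1$- and $X_2$-vertices, extract the simple loop formed at the first repeated vertex, and subtract a multiple of it small enough to avoid cancellation, then iterate. The only minor variation is that the paper initializes the walk at a globally minimum-weight edge and subtracts that global minimum times the loop (which guarantees no cancellation automatically), whereas you subtract the minimum "among the edges traversed" — make sure this minimum is taken over the edges of the extracted loop itself rather than the whole walk, since only then is at least one edge zeroed out, which is what your induction on the size of the support requires.
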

\begin{proof}
 The ``if'' part of the implication is obvious, as a sum of closed $2$-flows is closed. The proof of the other implication is an adaptation of a folklore result on weighted directed graphs satisfying the Kirchhoff balance equation at each vertex, so we only sketch it for the convenience of the reader. The basic idea is to start with an edge of the $2$-graph which is assigned lowest weight in absolute value, and use the Kirchhoff law to find a path of edges which is locally satisfying Kirchhoff law. When such path first self-intersects, we find one $2$-flow-loop $[A']$ such that $[A],[A]-[A']$ forms a decomposition of $[A]$ without cancellations. By repeating the procedure with $[A]$ replaced by $[A]-[A']$ we can keep diminishing the mass of the part not yet decomposed into loops, while diminishing the total number of loops in the graph corresponding to $A$. When the remaining graph is one single loop, the procedure ends.
 
 \medskip
 
 In order to describe how we single out one loop in the above sketch of proof, consider the nontrivial case $[A]\neq 0$ and assume (as we can do up to change of sign symmetry) that for some $(a_1,a_2)\in A$ there holds $m:=m(a_1,a_2)=\min\{|m(a'_1,a'_2)|:\ (a'_1,a'_2)\in A\}$. We start building a ``$2$-path-flow'' by defining $\gamma_1:=m [(a_1,a_2)]$. By considering the $\delta_{a_2}$-coefficient in the expression for $\partial^{(2)}[A]=0$, and due to the fact that $m=m(a_1,a_2)>0$ is minimal by definition, we find that there exists $a_1'\neq a_1\in X_1$ such that $(a'_1,a_2)$ belongs to the support of $[A]$ and $m(a'_1,a_2)\le - m$. We add this edge with coefficient $-m$ to $\gamma_1$ and obtain $\gamma_2:=m\left([(a_1,a_2)]-[(a'_1,a_2)]\right)$. By focusing now on the $\delta_{a'_1}$-coefficient in the expression for $\partial^{(2)}[A]=0$ we find $a'_2\in X_2$ such that $(a'_1,a'_2)\in A$ and this edge has weight $m(a'_1,a'_2)\ge m$, and we define $\gamma_3:=m\left([(a_1,a_2)]-[(a'_1,a_2)]+[(a'_1,a'_2)]\right)$. The procedure continues this way till we visit one point which was already visited before, and in this case a subpath of the $2$-flow-path we reached is a $2$-flow-loop. The procedure ends before step $k\le[A]/m$ because $[A]$ is of finite mass, we have that $\gamma_k, [A]-\gamma_k$ is a decomposition of $[A]$ without cancellations for every $k$, and $|\gamma_k|=km$ for every $k$.
\end{proof}
The above proof strategy also gives the following countable version of Lemma \ref{lem:fin2flow}:
\begin{lemma}\label{lem:finmass2flow}
 A finite-mass $2$-flow over a countable set $X$ is a linear combination of $2$-flow-loops without cancellations.
\end{lemma}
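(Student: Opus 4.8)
The plan is to extend the finite-mass argument from Lemma~\ref{lem:fin2flow} to a countable $2$-graph by a careful exhaustion/compactness step, taking advantage of the fact that the mass $|[A]|=\sum_{a\in A}|m(a)|<+\infty$ forces the ``tails'' of the flow to be negligible. First I would fix an enumeration $A=\{a_1,a_2,\ldots\}$ of the (countable) support and, for each $n$, consider the edge $a_{i_n}$ of currently-minimal nonzero absolute weight among the edges not yet used; as in the proof of Lemma~\ref{lem:fin2flow}, I would run the Kirchhoff-law path-following procedure starting from $a_{i_n}$. The key local fact, already used in the finite case, is that closedness $\partial^{(2)}[A]=0$ at each vertex guarantees that whenever a directed path of edges arrives at a vertex carrying a positive residual coefficient $m$, there is an outgoing (or, for the other orientation, incoming) edge of residual weight $\ge m$ in absolute value, so the path can always be continued; and since the path-building never decreases the number of used edges while the mass is finite, the path must eventually revisit a previously visited vertex, closing off a $2$-flow-loop $[A']$ with $[A'],[A]-[A']$ a decomposition without cancellations.

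The difference from the finite case is that the residual flow $[A]-[A']-\cdots$ need not terminate after finitely many steps. Here I would argue that the loops extracted, say $[A^{(1)}],[A^{(2)}],\ldots$, have masses summing to at most $|[A]|$ (by the no-cancellation property, $|[A]|=|[A^{(1)}]|+|[A]-[A^{(1)}]|=\cdots$), so $\sum_k |[A^{(k)}]|\le|[A]|<+\infty$; thus $\sum_k [A^{(k)}]$ converges absolutely as a formal sum of weighted edges, i.e.\ in the $|\cdot|$-norm. It remains to see that the limit is exactly $[A]$, i.e.\ that the residuals $[A]-\sum_{k\le K}[A^{(k)}]$ tend to $0$ in mass. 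This is where I would use that at each stage we peel off a loop through a globally minimal-absolute-weight residual edge: after that step, either that edge is entirely consumed (residual weight $0$) or its residual weight strictly drops, and moreover every edge in $A$ is eventually selected as the minimal one (since weights are bounded below away from $0$ on any fixed finite sub-collection of edges with nonzero residual, and the total residual mass is nonincreasing). A cleaner bookkeeping: order the procedure so that at step $n$ we treat the edge $a_n$ in the fixed enumeration if it still has nonzero residual; then after finitely many loops have been removed, edge $a_n$'s residual weight is $0$, so $[A]-\sum_k[A^{(k)}]$ has zero coefficient on every $a_n$, hence is the zero flow.

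The main obstacle I anticipate is precisely the convergence/termination argument for a single edge's residual weight: in the finite case one uses that $|\gamma_k|=km$ grows, contradicting finiteness of $|[A]|$, to bound the number of steps, but in the countable case the minimal weight $m$ at each stage can itself tend to $0$, so this crude bound fails. The fix is to decouple the loop-extraction from the minimality: instead of always starting at the global minimum, start the $n$-th loop-extraction at $a_n$ (the $n$-th edge in our fixed enumeration) using its current residual weight $m_n$ as the loop's coefficient, still continuing the path by the Kirchhoff-law availability of an edge of residual absolute weight $\ge m_n$. One must check that this still yields a decomposition without cancellations — which follows because at every step of the path we subtract a quantity no larger than the residual weight on that edge, keeping all residual weights of the same sign as in $[A]$ — and that after step $n$ the residual weight on $a_n$ is $0$. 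Then $[A]=\sum_n m_n[\text{loop}_n]$ with $\sum_n |m_n|\,\#(\text{loop}_n)\le C\,|[A]|<+\infty$ controlling convergence, and no cancellations because each partial sum is a no-cancellation decomposition of $[A]$. Finally I would remark that this also covers the case where some loop is in fact a $2$-flow-path that closes up at a vertex of $X_2$ rather than $X_1$, handled symmetrically, and that the hypothesis of finite mass (rather than mere finiteness of support) is exactly what makes the absolutely convergent formal sum meaningful.
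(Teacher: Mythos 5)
Your plan runs into a genuine gap at its central step. In the paper's finite-case argument the path-following works because the coefficient $m$ carried along the path is the \emph{globally minimal} nonzero absolute weight: closedness at a vertex only says that the signed weights incident to it sum to zero, and it is the global minimality of $m$ that upgrades ``there is balancing mass of total size at least $m$'' to ``there is a single balancing edge of absolute weight at least $m$''. Your proposed fix deliberately abandons this minimality (starting the $n$-th extraction at $a_n$ with its full residual weight $m_n$), and then the availability claim you rely on — ``by Kirchhoff there is a continuing edge of residual absolute weight $\ge m_n$'' — is simply false: the balancing mass at a vertex may be spread over several (in the countable case even infinitely many) edges, each of absolute weight strictly smaller than $m_n$. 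A concrete closed flow where this happens: $X_1=\{x,u,v\}$, $X_2=\{y,z,w\}$, weights $+1$ on $(x,y)$, $+\tfrac12$ on $(u,z),(v,w)$, and $-\tfrac12$ on $(u,y),(v,y),(x,z),(x,w)$. No single loop through $(x,y)$ can carry coefficient $1$ without cancellation (any loop through it carries at most $\tfrac12$), so your assertion that after step $n$ the residual on $a_n$ is zero fails, and forcing the coefficient $m_n$ along the path would flip signs and destroy exactly the no-cancellation property you need. The same false claim already appears in your first paragraph (``whenever the path arrives with coefficient $m$ there is an edge of residual weight $\ge m$''), and your earlier bookkeeping variant is also problematic because in the countable case a globally minimal nonzero residual weight need not exist at all (the infimum can be $0$ and unattained), nor is it clear that a fixed edge is ``eventually selected as the minimal one''.

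What is missing is twofold. First, one needs an argument, valid for infinite supports, that every edge in the support of a closed finite-mass $2$-flow lies on \emph{some} finite loop inside the signed support: for instance, orient positive-weight edges from $X_1$ to $X_2$ and negative-weight ones from $X_2$ to $X_1$, note that closedness becomes the Kirchhoff law for this positive circulation, and sum the Kirchhoff identities over the set of vertices reachable from the head of a given edge (the exchange of summations is legitimate precisely because the total mass is finite); if no directed cycle returned to the tail, the flux into this reachable set would be strictly positive while the flux out of it is zero, a contradiction. Second, since such a loop can only be removed with coefficient equal to the \emph{minimum} residual absolute weight along it (not the weight of the starting edge), one needs a separate scheduling/convergence argument showing that a suitable choice of loops (e.g. always through an edge of currently maximal residual weight, or loops nearly maximizing the removable coefficient) drives the monotone residual to zero, rather than your one-step ``edge $a_n$ is consumed at step $n$'' accounting. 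Your overall architecture (extract loops, control $\sum_k|[A^{(k)}]|\le|[A]|$ by no-cancellation, pass to the limit) is the right shape, and the absolute summability remark is correct, but as written the extraction step and the exhaustion step are both unjustified.
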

If $X$ is countable, $N=2$, and $c$ is arbitrary, Lemma \ref{lem:finmass2flow} can be applied to optimal transport as follows. Given two distinct plans $\gamma,\bar\gamma$ with finite cost and with the same marginal, they define flows $[G_\gamma]$ and $[G_{\bar\gamma}]$ such that $[G_\gamma]-[G_{\bar\gamma}]$ is a closed $2$-flow. So it is a superposition of finite loops. If we use $c$-monotonicity assumption on $\gamma$ for each such loop, we find by linearity that $\langle c,[G_\gamma]\rangle\le \langle c,[G_{\bar\gamma}]\rangle$, which due to the arbitrarity of $\bar\gamma$ implies the optimality of $\gamma$. This is basically the strategy of \cite{pratelli}. The failure of the above lemmas for $N\ge 3$ shows that such strategy does not extend for general $c$ due to the higher combinatorial complexity of this case.

\section{Generalized Smirnov decomposition for $N$-flows}
In Proposition \ref{prop:nflow_charact} we translate $c$-monotonicity in terms of a condition on closed finite $N$-flows supported in the $N$-graph associated to $c$, and Corollary \ref{cor:acyclic} produces counterexamples using the absence of such $N$-flows. It becomes useful to consider the classification of $N$-flows depending on the type of cycles they contain. Here we concentrate more on such properties for the case of $N$-graphs. Note that as before, we could consider countable $N$-graphs and avoid measurability issues, or consider more general $N$-graphs, on which weights and masses would be well-defined by introducing a measure space structure. 

\medskip 

The properties emerging in this context are analogous to those from the decomposition of currents/flows into cycles and acyclic parts from \cite{smirnov} and \cite{paoste}, from which we imitate the terminology. We also mention that the use of Smirnov decomposition in Optimal Transport type problems is not new, and has appeared in another setting in \cite{petrachebrasco}.

\medskip 

We start with the following definitions:
\begin{itemize}
\item An $N$-flow is called a \emph{cycle} if it is closed.
\item An $N$-flow $[A]$ is a \emph{subflow} of the $N$-flow $[B]$ if there exists an $N$-flow $[C]$ such that 
\[[A]+[C]=[B]\quad\mbox{and}\quad|[A]|+|[C]|=|[B]|.\] 
\item If $[A]=0$ or $[A]=[B]$ then $[A]$ is a \emph{trivial subflow} of $[B]$. 
\item An $N$-flow is called \emph{acyclic} if it has no nonzero cyclic subflows.
\item An $N$-flow is called \emph{solenoidal} if it is cyclic and has no finite cyclic subflows.
\end{itemize}
We have the following basic decomposition result, similar to \cite{smirnov} and \cite{paoste}. A similar result holds with measurability assumptions for more general $[A]$, corresponding to the case where transport plans correspond to general measures. We do not treat this generalization here, as the below countable case already contains the basic principle. As the method for the case of currents in the above papers is very similar, we only sketch the proof here.
\begin{prop}\label{prop:acycdec}
 If $[A]$ is an $N$-flow with countable support such that $|[A]|<\infty$ then there exist unique subflows $[A_1],[A_2], [A_3]$ of $[A]$ such that $[A_1]$ is acyclic, $[A_2]$ is solenoidal and $[A_3]$ is a superposition of finite cyclic $N$-flows, such that $[A]=[A_1]+[A_2]+[A_3]$.
\end{prop}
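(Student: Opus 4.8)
The plan is to imitate Smirnov's extraction procedure twice: first strip off the entire cyclic part of $[A]$, then split that cyclic part into a finite-cyclic piece and a solenoidal remainder. I would first record the elementary \emph{composition property} of subflows: if $[B]$ is a subflow of $[A]$ and $[C]$ is a subflow of $[A]-[B]$, then $[B]+[C]$ is a subflow of $[A]$. This follows from the chain $|[A]|\le|[B]+[C]|+|[A]-[B]-[C]|\le|[B]|+|[C]|+|[A]-[B]-[C]|=|[A]|$ (triangle inequalities, then the definition of subflow used twice), which forces $|[B]+[C]|+|[A]-([B]+[C])|=|[A]|$. The same bookkeeping shows that, writing $[A]=\sum_{a}m(a)[a]$, a subflow of $[A]$ is exactly a flow $\sum_a t(a)m(a)[a]$ with $t\colon A\to[0,1]$; I identify subflows with such functions $t$.

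\emph{Step 1 (cyclic part).} Let $S^{*}:=\sup\{|[C]|:[C]\text{ a cyclic subflow of }[A]\}\le|[A]|<\infty$ and take cyclic subflows $t_k$ with $|[C_k]|\to S^{*}$. Since $A$ is countable, a diagonal extraction gives $t_k(a)\to t^{*}(a)$ for all $a$. Because $\sum_a|m(a)|<\infty$ there is no escape of mass: the closedness equations $\sum_{a:\,a_i=v}t_k(a)m(a)=0$ (one per coordinate $i$ and vertex $v$) pass to the limit by dominated convergence, and $|[C_k]|=\sum_a t_k(a)|m(a)|\to\sum_a t^{*}(a)|m(a)|$. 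Hence $[A']:=\sum_a t^{*}(a)m(a)[a]$ is a cyclic subflow of $[A]$ of maximal mass $S^{*}$. Put $[A_1]:=[A]-[A']$; a nonzero cyclic subflow of $[A_1]$ would, by the composition property, enlarge $[A']$ to a cyclic subflow of $[A]$ of mass $>S^{*}$, so $[A_1]$ is acyclic.

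\emph{Step 2 (finite-cyclic vs.\ solenoidal).} Inside $[A']$ peel off finite cycles greedily: given the residual $R_{k-1}:=[A']-\sum_{j<k}[\Gamma_j]$ (inductively a cyclic subflow of $[A']$), if it carries a nonzero finite cyclic subflow, pick one, $[\Gamma_k]$, of mass at least half the supremum of such masses. Since $\sum_k|[\Gamma_k]|\le|[A']|<\infty$, these masses, hence those suprema, tend to $0$. Set $[A_3]:=\sum_k[\Gamma_k]$, a superposition of finite cyclic $N$-flows; the no-escape argument of Step~1 shows $[A_3]$ is a subflow of $[A']$ and that $R_k\to[A_2]:=[A']-[A_3]$. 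Then $[A_2]$ is closed (a difference of closed flows) and carries no nonzero finite cyclic subflow --- such a flow would be a finite cyclic subflow of every $R_k$ and keep the suprema from vanishing --- so $[A_2]$ is solenoidal. This gives $[A]=[A_1]+[A_2]+[A_3]$ with all sums mass-additive, proving existence.

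\emph{Step 3 (uniqueness).} The summands should be forced by maximality: $[A_3]$ as the largest subflow of $[A]$ that is a superposition of finite cyclic $N$-flows, $[A_2]+[A_3]$ as the largest cyclic subflow, and $[A_1]$ as the complement; that no larger object of each type exists follows from the composition-property arguments of Steps 1--2 run backwards. The delicate point --- which I expect to be the main obstacle --- is that these ``largest'' objects be well defined, i.e.\ unique: one would want any two cyclic subflows of $[A]$ to be subflows of a single cyclic subflow of $[A]$ (and likewise for superpositions of finite cycles), so that the relevant poset is directed upward and, mass being bounded, has a greatest element. This combining principle is exactly where the conservation-of-mass-along-loops structure of the case $N=2$ (Lemma~\ref{lem:fin2flow}) enters in the currents setting of \cite{smirnov} and \cite{paoste}; for $N$-flows, where by Proposition~\ref{prop:counterexample1} closed flows need not decompose into finite cycles, no such principle is available in general, so I would expect uniqueness to be the crux --- and perhaps to require an additional structural hypothesis on the admissible $N$-flows.
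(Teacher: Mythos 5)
Your existence argument is correct but takes a genuinely different route from the paper's sketch. The paper first peels off the largest-mass \emph{finite} cyclic subflows to form $[A_3]$, then, from the residual, peels off acyclic subflows, claiming the procedure terminates because each such removal eats a positive amount of boundary mass; this rests on the asserted identity $|\partial^{(N)}[A]|=|\partial^{(N)}[A']|+|\partial^{(N)}[A'']|$ for mass-additive splittings. You instead extract in one compactness step a maximal-mass cyclic subflow $[A']$ (diagonal extraction of the parameters $t_k\colon A\to[0,1]$, dominated convergence against $|m|\in\ell^1$), declare the residual acyclic via the composition property, and only afterwards greedily peel finite cycles inside $[A']$, with termination from mere summability of the extracted masses. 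Your route buys something real: the paper's boundary-mass-additivity claim is in fact false in general --- a mass-additive splitting can \emph{create} boundary cancellation. For instance, with $N=2$, split $[A]=[(x,y)]-[(x,z)]$ into $[A']=[(x,y)]$ and $[A'']=-[(x,z)]$: this is mass-additive ($1+1=2$), yet $|\partial^{(2)}[A]|=2$ while $|\partial^{(2)}[A']|+|\partial^{(2)}[A'']|=2+2=4$. So the paper's termination argument for the acyclic/solenoidal split does not work as written, whereas your compactness argument sidesteps it. On uniqueness you and the paper's sketch are in the same position --- neither actually proves it. You correctly isolate what is missing: that the poset of cyclic subflows (and of superpositions of finite cycles) be directed under the subflow order, so that a unique maximal element exists. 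This is exactly the point where, for $N=2$, the conservation-of-mass-along-loops structure (Lemma \ref{lem:fin2flow}) is available, and your caution about whether an analogue holds for $N\ge3$ without further hypotheses is well placed; as it stands the uniqueness clause in the proposition is not established by either argument.
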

\begin{proof}[Sketch of proof of Proposition \ref{prop:acycdec}]
We call a sequence of $N$-flows $[B_1],[B_2],\cdots$ an \emph{increasing sequence} if each $[B_k]$ has finite mass and is a subflow of $[B_{k+1}]$. We will use the fact that an increasing sequence of subflows has a limit which is itself a subflow. If $[A]$ has a finite-support cyclic subflow, we may remove the largest-mass finite cycle subflow from $[A]$ and diminish its mass. This can be done an at most countable number of times, as each time the mass of $[A]$ diminishes by a nonzero amount. Thus we may assume that $[A]$ has no finite cycle subflow and that $[A_3]=0$. Now note that when we have $[A]=[A']+[A'']$ and $|[A]|=|[A']|+|[A'']|$, there also holds $\partial^{(N)}[A]=\partial^{(N)}[A']+\partial^{(N)}[A'']$ and $|\partial^{(N)}[A]|=|\partial^{(N)}[A']|+|\partial^{(N)}[A'']|$. Thus we may remove from $A$ acyclic subflows in mass-decreasing order, and we terminate the procedure in a countable number of steps, as each such flow removes a positive amount of boundary mass. The sum $[A_2]$ of all such subflows is then acyclic and is a subflow of $[A]$ and the difference $[A]-[A_2]=:[A_1]$ is then by construction solenoidal.
\end{proof}
In order to make Corollary \ref{cor:acyclic} more concretely useful, we would need to consider the following.
\begin{oprob} Find necessary conditions and sufficient conditions for an $N$-graph $G$ to support no finite cyclic $N$-flows. Equivalently, find necessary conditions and sufficient conditions such that $G$ supports only acyclic and solenoidal flows.
\end{oprob}


\begin{thebibliography}{10} 
%

\bibitem{bbbrw} S. Bartz, H.H. Bauschke, J.M. Borwein, S. Reich, X. Wang, \emph{Fitzpatrick functions, cyclic monotonicity and Rockafellar's antiderivative}, Nonlinear Analysis: Theory, Methods and Applications, {\bf 66} no.5 (2007), 1198--1223.

\bibitem{bbw} S. Bartz, H.H. Sedi, X. Wang, \emph{ A class of multi-marginal $c$-cyclically monotone sets with explicit $c$-splitting potentials}, Journ. Math. Anal. Appl., {\bf 461} no.1 (2018), 333--348.

\bibitem{beiglbock} M. Beiglb\"ock, \emph{Cyclical monotonicity and the ergodic theorem}, Ergodic Theory and Dynamical Systems, {\bf 35} no. 3 (2015), 710--713.

\bibitem{bianchinicaravenna} S. Bianchini, L. Caravenna, \emph{On optimality of c-cyclically monotone transference plans}, C. R. Math. Acad. Sci. Paris, {\bf 348}, no. 11-12 (2010), 613--618.

\bibitem{petrachebrasco} L. Brasco, M. Petrache, \emph{A continuous model of transportation revisited}, Journal of mathematical sciences, {\bf 196} no. 2 (2014), 119--137.

\bibitem{depascaledual} L. De Pascale. \emph{Optimal transport with Coulomb cost. Approximation and duality}, ESAIM Math. Model. Numer. Anal., {\bf 49} no. 6, 1643--1657.

\bibitem{depascale} L. De Pascale, \emph{On $c$-cyclical monotonicity for optimal transport problem with Coulomb cost}, to appear in European Journal of Applied Mathematics (2019).

\bibitem{frieseckevogler} G. Friesecke, D. V\"ogler, \emph{Breaking the curse of dimension in multi-marginal Kantorovich optimal transport on finite state spaces}, SIAM Journal on Mathematical Analysis {\bf 50} no.4 (2018), 3996--4019.

\bibitem{Griessler} C. Griessler, \emph{$C$-cyclical monotonicity as a sufficient criterion for optimality in the multimarginal Monge-Kantorovich problem}, Proc. AMS {\bf 146} no.11 (2018), 4735--4740.

\bibitem{paoste} E. Paolini, E. Stepanov \emph{Decomposition of acyclic normal currents in a metric space}, Journal of functional analysis, {\bf 263} no. 11 (2012), 3358--3390.

\bibitem{pratelli} A. Pratelli, \emph{On the sufficiency of $c$-cyclical monotonicity for the optimality of transport plans}, Math. Z. {\bf 258} (2008), 667--690.

\bibitem{Rockafellar} R.T. Rockafellar, \emph{Characterization of the subdifferentials of convex functions}, Pacific Journal
of Mathematics {\bf 17} (1966), 497--510.

\bibitem{teichmair} W. Schachermayer, J. Teichmann, \emph{Characterization of optimal transport plans for the Monge-Kantorovich problem}, Proc. Amer. Math. Soc., {\bf 137} no. 2 (2009), 519--529.


\bibitem{smirnov} S. Smirnov \emph{Decomposition fo solenoidal vector charges into elementary solenoids and the structure of normal one-dimensional currents}, St. Petersburg Math J., {\bf 5} no. 4 (1994), 841--867.

\bibitem{Villani}  C. Villani, \emph{Optimal Transport: Old and New}, Springer, 2009.


\end{thebibliography}
\end{document}